\documentclass[12pt, reqno]{amsart}

\usepackage{amsmath,amssymb,amscd,amsxtra}
\usepackage{latexsym}
\usepackage[dvips]{graphics,epsfig}
\usepackage{enumerate}

\usepackage[margin=1.2in,marginparwidth=1.5cm, marginparsep=0.5cm]{geometry}

\usepackage
[implicit=true]{hyperref}

\newtheorem{theorem}{Theorem}[section]
\newtheorem{corollary}[theorem]{Corollary}
\newtheorem{lemma}[theorem]{Lemma}
\newtheorem{proposition}[theorem]{Proposition}
\newtheorem{definition}[theorem]{Definition}
\newtheorem{remark}[theorem]{Remark}
\newtheorem{example}[theorem]{Example}


\newcommand{\al}{\alpha}
\newcommand{\be}{\beta}

\newcommand{\cT}{\mathcal T}
\newcommand{\Si}{\Theta}
\newcommand{\om}{\omega}

\newcommand{\bn}{{\mathbb N}}
\newcommand{\br}{{\mathbb R}}

\newcommand{\rd}{{\mathbb R^d}}
\newcommand{\bz}{{\mathbb Z}}

\newcommand{\zd}{\mathbb Z^d}

\newcommand{\elp}{\ell^p}
\newcommand{\elq}{\ell^q}
\newcommand{\elr}{\ell^r}

\newcommand{\N}{\mathbb{N}}

\newcommand{\beq}{\begin{equation}}
\newcommand{\eeq}{\end{equation}}

\newcommand{\eps}{\varepsilon}

\newtheorem*{ackno}{Acknowledgments}

\usepackage{color}

\newcommand{\les}{\lesssim}

\newcommand{\noi}{\noindent}

\newcommand{\ind}{\mathbf 1}

\newcommand{\Z}{\mathbb{Z}}
\newcommand{\R}{\mathbb{R}}
\newcommand{\C}{\mathbb{C}}
\newcommand{\T}{\mathbb{T}}

\newcommand{\s}{\sigma}

\newcommand{\F}{\mathcal{F}}

\renewcommand{\l}{\ell}

\newcommand{\Dl}{\Delta}

\newcommand{\dd}{\partial}

\newcommand{\jb}[1]
{\langle #1 \rangle}

\newcommand{\ft}{\widehat}

\numberwithin{equation}{section}
\numberwithin{theorem}{section}

\makeatletter
\@namedef{subjclassname@2020}{%
  \textup{2020} Mathematics Subject Classification}
\makeatother

\begin{document}

\baselineskip = 14 pt

\title[Discrete bilinear operators]
{Discrete bilinear operators and commutators}

\author[\'A. B\'enyi and T. Oh]{\'Arp\'ad B\'enyi and Tadahiro Oh}


\subjclass[2020]{Primary 35S05, 47G30; Secondary 42B15, 42B20}

\keywords{bilinear discrete operator; bilinear pseudodifferential operator; bilinear
H\"ormander class; bilinear Calder\'on-Zygmund kernel}

\address{\'Arp\'ad B\'enyi, Department of Mathematics,
516 High St, Western Washington University, Bellingham, WA 98225,
USA.} \email{benyia@wwu.edu}

\address{Tadahiro Oh, School of Mathematics, The University of Edinburgh, and The Maxwell
Institute for the Mathematical Sciences, James Clerk Maxwell Building, The King’s Buildings,
Peter Guthrie Tait Road, Edinburgh, EH9 3FD, UK.}
\email{hiro.oh@ed.ac.uk}


\begin{abstract}
We discuss  boundedness properties of certain classes of discrete bilinear operators that are similar to those of the continuous bilinear pseudodifferential operators with symbols in the H\"ormander classes $BS^{\om}_{1, 0}$.  In particular, we investigate their relation to discrete analogues of the bilinear Calder\'on-Zygmund singular integral operators and show   compactness of their commutators.
\end{abstract}

\maketitle

\section{Introduction}

The study of bilinear operators within Fourier analysis goes back to the seminal work of Coifman and Meyer in the 1970's on the Calder\'on commutators \cite{cm0, cm1, cm2}. A nowadays classical result, known as the Coifman-Meyer multiplier theorem, concerns the boundedness of bilinear pseudodifferential operators with symbols in the class $BS_{1, 0}^0(\R^d)$ from $L^p(\R^d)\times L^q(\R^d)$ to $L^r(\R^d)$ for $1 < p, q\le \infty$,
$\frac 12 < r < \infty$,  and $\frac{1}{p}+\frac{1}{q}=\frac{1}{r}$. This further led to the general study of the bilinear H\"ormander classes $BS^{\om}_{\rho, \delta}$ including, when appropriate, their symbolic calculus, boundedness properties, and applications to PDEs; see, for example, \cite{ bbmnt, bmnt,N, T} and the references therein for a succinct introduction to the subject. More precisely, the bilinear pseudodifferential operators considered in these works are a priori defined on appropriate test functions and
are given by
\beq\label{continuous-bilinearop}
T_\sigma (f, g)(x)=\int_\rd \int_{\mathbb R^d} \sigma (x, \xi, \eta)\widehat
f(\xi)\widehat g(\eta)e^{ix\cdot (\xi+\eta)}\, d\xi d\eta,
\eeq
where the symbol $\sigma$ satisfies estimates of the form:
\begin{equation}\label{hormander}
|\partial_x^\alpha\partial_\xi^\beta\partial_\eta^\gamma \sigma (x,
\xi, \eta)|\leq C_{\alpha, \beta, \gamma} (1+|\xi|+|\eta|)^{\om+\delta
|\alpha|-\rho (|\beta|+|\gamma|)},
\end{equation}
for any $x,\,\xi,\,\eta\in\mathbb R^d$, any multi-indices $\alpha,
\beta, \gamma$,  and some positive constants
$C_{\alpha, \beta, \gamma} > 0$.
The  class of  symbols satisfying
\eqref{hormander} is denoted by $BS_{\rho, \delta}^{\om} (\mathbb R^d),$ or simply
$BS_{\rho, \delta}^{\om}$ when it is clear from the context to which space the variables
$x, \xi, \eta$ belong. For example, any bilinear partial differential operator with variable coefficients that have bounded derivatives
\begin{align}
L(f, g)=\sum_{|\al|+|\be|\leq\om}a_{\al, \be}(x)\frac{\partial^\al f}{\partial x^\al}\frac{\partial^\be g}{\partial x^\be}
\label{D1}
\end{align}
can be realized as an operator of order $\om$
with $L=T_\sigma$ as in \eqref{continuous-bilinearop},  where $\sigma\in BS^{\om}_{1, 0}$.

Despite  H\"ormander's initial undertakings \cite{H} ``the use of Fourier transformations has been emphasized; as a result no singular integral operators are apparent...", the later works made clear that,  for symbols of order zero such as $BS_{1, 0}^0$, the associated operators are bilinear Calder\'on-Zygmund singular integral operators, and therefore appropriate tools from the theory of singular integrals
provide  an alternative argument for  boundedness results without the use of Littlewood-Paley theory as in \cite{cm1}; see also~\cite{bo}. More precisely, if we formally invert the Fourier transforms in~\eqref{continuous-bilinearop}, $T_\sigma$ has an integral representation on the physical side  as
\[
T_\sigma (f, g)(x)=\langle K_\sigma(x, y, z), (f\otimes g)(y, z)\rangle,
\]
where $(f\otimes g)(y, z)=f(y)g(z)$, $\langle\cdot,\cdot\rangle$ denotes the usual distribution-test function pairing (in $y$ and $z$), and the kernel $K_\sigma (x, y, z)$ is an appropriate distribution that is singular along some variety.
It is known \cite{GT2, BT} that if the symbol $\sigma\in BS_{1, 0}^0$, then $K_\sigma$ is a bilinear Calder\'on-Zygmund kernel, that is, a function on $\br^{3d}$ such that, away from the diagonal $\mathbb D=\{(x, y, z)\in\br^{3d}: x=y=z\}$, we have
\begin{align*}
|K_\sigma(x, y, z)|&\lesssim (|x-y|+|y-z|+|z-x|)^{-2d},\\
|\nabla K_\sigma (x, y, z)|&\lesssim (|x-y|+|y-z|+|z-x|)^{-2d-1},
\end{align*}
and
\beq
\label{kernel}
T_\sigma(f, g)(x)=\int_{\rd}\int_{\rd}K_\sigma(x, y, z)f(y)g(z)\,dydz
\eeq
for $x\not\in\text{supp}(f)\cap\text{supp}(g)$. Moreover,
\cite[Theorem 5.1]{bmnt} shows that, given  multi-indices $\alpha, \beta, \gamma
\in(\bn\cup\{0\})^d$, there exists $N_0\in\bn$ such that for all $N\geq N_0$,
\beq
\label{diag-esti}
\sup_{(x, y, z)\in\br^{3d}\setminus\mathbb D}|\partial_x^\alpha\partial_y^\beta\partial_z^\gamma K_\sigma(x, y, z)|
(|x-y|+|y-z|+|z-x|)^N<\infty.
\eeq
The goal of this note is to put forth the conceivability of an appropriate theory of \emph{discrete} bilinear operators of the same flavor as the continuous ones discussed above. As such, it is inspired by the works \cite{C} and \cite{fg}. The essential insight of \cite{C} was to characterize the space of infinite matrices that model (linear) pseudodifferential operators and to define an appropriate notion of order that is reminiscent, if not the same, of that appearing in the definition of the linear H\"ormander class $S^{\om}_{1, 0}$. It is within this framework that several applications to numerical approximations in PDEs were then taken up in \cite{fg}. Our own modest intention instead stems purely from exploring some appropriate boundedness results for the discrete analogues of classes of operators defined via infinite Calder\'on-Zygmund-like tensors, as well as showing the compactness of such operators and multiplication by bounded sequences with compact support.

\section{Definitions}\label{Def}

A discrete weight function is a non-negative function $w: \mathbb Z^d\to [0, \infty)$.
Given $1\leq p\leq \infty$, the \emph{weighted space of $p$-summable sequences} $\ell^p_w(\mathbb Z^d)$ (or simply $\ell^p_w$ when the indexing set of the sequences is understood contextually) consists of all functions $f: \mathbb Z^d\to\mathbb C$, that is sequences $f=\{f_k\}_{k\in\mathbb Z^d}$, such that the norm
\begin{equation*}
\|f\|_{\ell^p_w}=\bigg(\sum_{k\in\mathbb Z^d}w(k)^p|f_k|^p\bigg)^{\frac 1p}
\end{equation*}
is finite, with appropriate modifications when $p=\infty$.
One of the standard classes of discrete weights is that of ``power'' type; given $s\in\mathbb R$, we define $w_s:\mathbb Z^d\to (0, \infty)$ by
\begin{align}
w_s(k)= \jb{k}^s
\label{power1}
\end{align}

\noi
 for  $k = (k_1,\dots, k_d)\in\mathbb Z^d$,
 where $\jb{k} = ( 1 + |k|^2)^\frac{1}{2}$
 with  $|k|^2=|k_1|^2+\cdots+|k_d|^2$.
For this particular discrete weight function $w_s$ in \eqref{power1},
we simply
 write $\l^p_s(\zd)$ for $\ell^p_{w_s}(\zd)$.
 When $p = 2$, we have $f \in \l^2_s$ if and only if $\F^{-1}(f) \in H^s(\T^d)$,
 where the latter space denotes
 the usual $L^2$-based Sobolev space\footnote{Recall that $H^s(\T^d) = L^2_s(\T^d)$,
 where $L^2_s(\T^d)$ is the Bessel potential space; see also \cite{BO5}.}
 and $\F^{-1}$ denotes the inverse Fourier transform.
 In  \cite{fg}, the space  $\l^2_s$ is referred to as ``discrete Sobolev space" (and denoted by $h^s$ in \cite{fg}).
 For general $1\le p \le \infty$, we have the following relation;
 $f \in \l^p_s$ if and only if $\F^{-1}(f)$ belongs
 to the so-called Fourier-Lebesgue space $\F L^{s. p}(\T^d)$ which plays
 an important role in the study of nonlinear PDEs; see, for example,
 \cite{OW1, OW2}.
 It is easy to see that  the dual of $\l^p_s$ is $\l^{p'}_{-s}$ where $p'$ is the H\"older conjugate of $p$.

Consider now an infinite tensor $\Si: \mathbb Z^d\times\mathbb Z^d\times\mathbb Z^d\to\mathbb C$; $\Si$ will be identified with the collection of its elements $\Si=\{\Si(j, k, \l)\}_{(j, k, \l)\in\mathbb Z^{3d}}$.
For appropriate sequence spaces $X, Y, Z$, a tensor
 $\Si$ induces an, a priori formally defined, bilinear operator $\cT_{\Si}: X\times Y\to Z$ acting on pairs of sequences $(f, g)\in X\times Y$ via the formula
\begin{equation}
\label{db1}
(\cT_{\Si} (f, g))_j=\sum_{k\in\mathbb Z^d}\sum_{\l\in\mathbb Z^d}\Si(j, k,\l)f_kg_\l,
\quad  j\in\zd.
\end{equation}
The operator in \eqref{db1} bears a strong resemblance to the continuous version of the bilinear 
Calder\'on-Zygmund singular integral operator in \eqref{kernel},
 which, under an appropriate condition, can be identified with
  the bilinear pseudodifferential operator
 in~\eqref{continuous-bilinearop}.
 Naturally, one is interested in conditions on the tensor $\Si$ that make the bilinear operator $\cT_\Si$ continuous on appropriate function spaces $X, Y, Z$. For example, assuming that
 $\|\Si\|_{\ell^1_j\ell^2_k\ell^2_\l}<\infty$, we have that $\cT_{\Si}: \ell^2(\zd)\times\ell^2(\zd)\to\ell^1({\zd})$ is a bounded bilinear operator. Indeed,
 by the Cauchy-Schwarz inequality, we have
\begin{align*}
\|\cT_{\Si}(f, g)\|_{\ell^1}&=\sum_{j\in\zd}|(\cT_{\Si}(f, g))_j|
\le \sum_{j \in \Z^d} \sum_{k. \l \in \Z^d} |\Si(j, k , \l)| |f_k| |g_\l|\\
&\leq \|\Si\|_{\ell^1_j\ell^2_k\ell^2_\l}\|f\|_{\ell^2}\|g\|_{\ell^2}.
\end{align*}

\noi
One can easily extend this argument  to more general H\"older triples of exponents $(p, q, r)\in [1, \infty]^3$ satisfying $\frac{1}{p}+\frac{1}{q}=\frac{1}{r}$; for example, assuming that the mixed Lebesgue norm\footnote{By using duality, we can take
the mixed-Lebesgue $\ell^r_j \ell_k^{p'}\ell^{q'}_{\l}$-norm
in any order and thus it suffices to assume that the minimum
mixed-Lebesgue norm is finite to guarantee the boundedness
of the operator $\cT_{\Si}: \ell^p(\zd)\times\ell^q(\zd)\to\ell^r(\zd)$.}
$\|\Si(j, k, \l)\|_{\ell^r_j \ell_k^{p'}\ell^{q'}_{\l}}<\infty$,  we can prove that
$\cT_{\Si}: \ell^p(\zd)\times\ell^q(\zd)\to\ell^r(\zd)$ is a bounded bilinear operator.

\begin{remark}\rm
Given an infinite matrix $\s:\Z^d \times \Z^d \to \C$, we can (formally) define
a discrete linear operator $\mathcal{L}_\s$ by
\begin{align}
(\mathcal L_\sigma(f))_j=\sum_{k\in\zd}\sigma(j, k)f_k, \quad j\in\zd,
\label{L1}
\end{align}

\noi
for $f = \{f_k\}_{k \in \Z^d}$.
Let $F$ be a function on the torus $\T^d = (\R/\Z)^d$
such that its Fourier coefficient $\ft F(k)$ agrees with $f_k$ for any $k \in \Z^d$.
Then, we can associate the operator $\mathcal{L}_\s$, acting on sequences,
with the following operator, acting on functions on $\T^d$:
\begin{align}
L_\s (F)(x) = \int_{\T^d} K_\s(x, y) F(y) dy,
\label{L2}
\end{align}

\noi
where the kernel $K_\s$ is given by
\[ K_\s(x, y) = \sum_{j \in \Z^d} \sum_{k \in \Z^d} \s(j, k)e^{2\pi i j \cdot x} e^{-2\pi i k\cdot  y}.\]

\noi
Hence,
prior knowledge on continuous linear operators on $\T^d$ of the form \eqref{L2}
provides insights on discrete linear operators on $\Z^d$ of the form \eqref{L1}.

Given
 $f = \{f_k\}_{k \in \Z^d}$ and  $g = \{g_k\}_{k \in \Z^d}$,
let $F$ and $G$ be functions on $\T^d$
such that $\ft F(k) = f_k$ and $\ft G(k) = g_k$
 for any $k \in \Z^d$.
Then, we can express a discrete bilinear operator $\cT_\Si$ in \eqref{db1}
as a continuous bilinear operator $T_\Si$ on $\T^d$, formally  given by
\begin{align*}
T_\Si(f, g)(x)=\int_{\T^d}\int_{\T^d}K_\Si(x, y, z)f(y)g(z)\,dydz,
\end{align*}

\noi
where the kernel $K_\Si$ is given by
\[ K_\Si(x, y, z) = \sum_{j \in \Z^d} \sum_{k \in \Z^d} \sum_{\l \in \Z^d}
\Si(j, k, \l)e^{2\pi i j \cdot x} e^{-2\pi i k\cdot  y}e^{-2\pi i \l\cdot  z}.\]

\noi
Thus, boundedness of $\cT_\Si$
from $\l^p_{s_1}(\Z^d) \times \l^q_{s_2} (\Z^d)$ to $\l^r_{s_3}(\Z^d)$
is equivalent to
boundedness of $T_\Si$
on the Fourier-Lebesgue spaces: $\F L^{s_1, p}(\T^d) \times
\F L^{s_2, q}(\T^d) \to \F L^{s_3, r}(\T^d)$.

\end{remark}

\section{Towards a class of discrete bilinear symbols}
\label{SEC:3}

The discussion at the end of the previous section makes it clear that a sufficiently fast decay in each one of the entries of the infinite tensor $\Si$ is sufficient to guarantee  boundedness of the associated bilinear operator $\cT_{\Si}$ on products of discrete Lebesgue spaces.
A natural question then is whether one can impose
an appropriate condition on the ``discrete kernel'' $\Si$ reminiscent of those for the continuous bilinear H\"ormander classes defined in \eqref{hormander}
(and  for their distributional kernels \eqref{diag-esti})
that would guarantee boundedness of the discrete bilinear operator $\cT_{\Si}$
defined in \eqref{db1}. 

One of the first observations towards answering this question is the following;
if  a tensor  $\Si$ is almost diagonal, 
namely, if $\Si$ decays rapidly away from the main diagonal
$\{(j, k, \l)\in\bz^{3d}: j=k=\l\}$,
then $\cT_\Si$ is bounded from
$ \ell^p(\zd)\times\ell^q(\zd)$ to $\ell^r(\zd)$. One of the goals of this section is to make this remark precise. It is worth noting that the sufficiency of almost diagonal conditions is rather natural
in view of  the estimates~\eqref{diag-esti} on distributional kernels corresponding to the
H\"ormander  classes $BS_{1, \delta}^0, 0\leq\delta<1,$ as well as their appearance elsewhere in multilinear harmonic analysis; see, for example, \cite{bt, gt} for almost diagonal estimates stemming from wavelet discretizations of multilinear operators.

Let $\om\in\mathbb R$ and $N\in\bn$. Given a tensor $\Si:\Z^d \times \Z^d \times \Z^d \to \C$, define the following norm:
\beq
\label{A1}
\|\Si\|_{\om, N}:=\sup_{j, k, \l\in\zd}\frac{|\Si(j, k, \l)|\jb{|j-k|+|j-\l|}^{2N}}{\jb{|j|+|k|}^\om\jb{|j|+|\l|}^\om}.
\eeq

\noi
Then, we have the following boundedness of the bilinear operator $\cT_\Si$.

\begin{proposition}
\label{PROP:1}
Let $s_1, s_2, \omega\in\br$ and set
\begin{align}
N_0=N_0(d, \om, s_1, s_2)
:=d+\om_++ \tfrac 12 \big\{|s_1+\om| + |s_2+\om|\big\},
\label{N0}
\end{align}
where $\om_+ := \max(\om, 0)$.  Suppose  that $\|\Si\|_{\om, N}<\infty$ for some $N >  N_0$.
Then,
 $\cT_\Si$ defined in~\eqref{db1} is a bounded bilinear operator from $\l^p_{s_1+\om}(\zd)\times \l^q_{s_2+\om}(\zd)$ to $\l^r_{s_1+s_2}(\zd)$
 for any $1\leq p, q, r\leq\infty$ with  $\frac{1}{p}+\frac{1}{q}=\frac{1}{r}$.

\end{proposition}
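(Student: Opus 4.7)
The strategy is to absorb the weights into the input sequences and then reduce matters to an ``almost-unweighted'' bilinear estimate whose kernel decays in a tensor-product fashion along the two off-diagonal directions $j-k$ and $j-\l$. Set $\tilde f_k := \jb{k}^{s_1+\om}|f_k|$ and $\tilde g_\l := \jb{\l}^{s_2+\om}|g_\l|$, so that $\|f\|_{\l^p_{s_1+\om}}=\|\tilde f\|_{\l^p}$ and $\|g\|_{\l^q_{s_2+\om}}=\|\tilde g\|_{\l^q}$, and introduce
\[
\widetilde\Si(j,k,\l):=\jb{j}^{s_1+s_2}|\Si(j,k,\l)|\jb{k}^{-s_1-\om}\jb{\l}^{-s_2-\om}.
\]
Then the proposition amounts to the $\l^p\times\l^q\to\l^r$ boundedness of the positive bilinear operator with kernel $\widetilde\Si$ acting on $\tilde f, \tilde g$.

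The heart of the proof is the pointwise estimate
\[
\widetilde\Si(j,k,\l)\lesssim \|\Si\|_{\om,N}\jb{j-k}^{-a}\jb{j-\l}^{-b}
\]
for some $a,b>d$. I would derive it by feeding the hypothesis $|\Si(j,k,\l)|\le \|\Si\|_{\om,N}\jb{|j|+|k|}^\om\jb{|j|+|\l|}^\om\jb{|j-k|+|j-\l|}^{-2N}$ into three Peetre-type inequalities:
\begin{enumerate}[(i)]
\item $\jb{|j|+|k|}^\om\le C\jb{j}^\om\jb{j-k}^{\om_+}$, verified by splitting into $\om\ge 0$ (via $\jb{|j|+|k|}\le C\jb{j}\jb{j-k}$) and $\om<0$ (via $\jb{|j|+|k|}\ge\jb{j}$), and analogously for $\l$;
\item $\jb{j}^{s_i+\om}\jb{k}^{-(s_i+\om)}\le C\jb{j-k}^{|s_i+\om|}$, the standard Peetre bound;
\item $\jb{|j-k|+|j-\l|}^{-2N}\le\jb{j-k}^{-2N\alpha}\jb{j-\l}^{-2N(1-\alpha)}$ for any $\alpha\in[0,1]$, since the maximum dominates any weighted geometric mean.
\end{enumerate}
Collecting exponents yields the bound with $a=2N\alpha-\om_+-|s_1+\om|$ and $b=2N(1-\alpha)-\om_+-|s_2+\om|$, so $a+b=2(N-N_0)+2d>2d$ exactly by the hypothesis $N>N_0$. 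One can then pick $\alpha\in(0,1)$ so that simultaneously $a>d$ and $b>d$.

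The conclusion follows by a convolution-type argument. After the substitution $m=j-k,\,n=j-\l$ the relevant sum becomes $\sum_{m,n}\jb{m}^{-a}\jb{n}^{-b}\tilde f_{j-m}\tilde g_{j-n}$, and since $r\ge 1$, Minkowski's inequality followed by H\"older in $j$ yields
\[
\Big\|\sum_{m,n}\jb{m}^{-a}\jb{n}^{-b}\tilde f_{j-m}\tilde g_{j-n}\Big\|_{\l^r_j}\le\Big(\sum_{m\in\Z^d}\jb{m}^{-a}\Big)\Big(\sum_{n\in\Z^d}\jb{n}^{-b}\Big)\|\tilde f\|_{\l^p}\|\tilde g\|_{\l^q},
\]
both sums being finite because $a,b>d$. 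The main subtlety is step (iii): a naive factorization $\jb{|j-k|+|j-\l|}^{-2N}\le\jb{j-k}^{-N}\jb{j-\l}^{-N}$ would force the strictly stronger requirement $N>d+\om_++\max(|s_1+\om|,|s_2+\om|)$, so the symmetric coefficient $\tfrac12(|s_1+\om|+|s_2+\om|)$ appearing in $N_0$ genuinely reflects the freedom to tune $\alpha$.
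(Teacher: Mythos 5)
Your proof is correct and follows essentially the same route as the paper's: absorb the weights into $f,g$, control the kernel by the same Peetre-type bounds, split the off-diagonal decay $2N$ into two pieces each exceeding $d+\om_++|s_i+\om|$ (your weighted geometric mean with parameter $\alpha$ is precisely the paper's decomposition $2N=N_1+N_2$), and finish with a Minkowski--H\"older convolution estimate in place of the paper's H\"older-plus-Young step. Your closing remark about the need for an asymmetric split is exactly what the paper's choice of $N_1, N_2$ encodes.
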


\begin{remark}\rm
Proposition \ref{PROP:1} is of particular interest when $s_1 = s_2 = \om =:\frac{s}{2} $.
In this case,
for $N > d +  \frac 12 s_+ + |s|$,
Proposition \ref{PROP:1} implies boundedness
of $\cT_\Si$ from $\l^p_s(\Z^d)\times \l^q_s(\Z^d)$ to $\l^r_s(\Z^d)$.
In particular, when $s_1 = s_2 = \om = 0$,
the hypothesis of Proposition \ref{PROP:1} reduces to
\beq
\label{A2}
\|\Si\|_{0, N}=\sup_{j, k, \l\in\zd}|\Si(j, k, \l)|\jb{|j-k|+|j-\l|}^{2N} < \infty
\eeq

\noi
for some $N > d$,
which resembles the bound \eqref{diag-esti} in the continuous case
(with $\alpha = \beta = \gamma = 0$).
See also Corollary \ref{COR:2} below.
\end{remark}

\begin{proof}[Proof of Proposition \ref{PROP:1}]
Given $f=\{f_k\}_{k\in\zd}\in \l^p_{s_1+\om}$ and
$g=\{g_\l\}_{\l\in\zd}\in \l^q_{s_2+\om}$,
set $F=\{F_k\}_{k\in\zd}$ and $G=\{G_\l\}_{\l\in\zd}$
by $F_k =\jb{k}^{s_1+\om}|f_k|$ and
$G_\l =\jb{\l}^{s_2+\om}|g_\l|$
such that   $F\in\ell^p$ and $G\in\ell^q$
with $\|F\|_{\elp}=\|f\|_{\l^p_{s_1+\om}}$
and $\|G\|_{\elq}=\|g\|_{\l^q_{s_2+\om}}$.

From  the definition \eqref{A1}, we have
\begin{align}
\|\cT_{\Si}( & f, g) \|_{\l^r_{s_1+s_2}}^r=\sum_{j\in\zd}\Big| \jb{j}^{s_1+s_2}
\sum_{k, \l\in\zd}\Si(j, k, \l)f_kg_\l\Big|^r \notag \\
&\le \sum_{j\in\zd } \Big(\sum_{k, \l\in\zd}|\Si(j, k, \l)|
\jb{j}^{s_1+s_2}
\jb{k}^{-s_1-\om}\jb{\l}^{-s_2-\om}F_k G_\l\Big)^r  \label{A3}\\
&\lesssim
\|\Si\|_{\om, N}^r
\sum_{j\in\zd}\bigg(\sum_{k, \l\in\zd}
\frac{\jb{|j|+|k|}^\om\jb{|j|+|\l|}^\om\jb{j}^{s_1+s_2}}
{\jb{|j-k|+|j-\l|}^{2N}\jb{k}^{s_1+\om}\jb{\l}^{s_2+\om}}F_kG_\l
\bigg)^r.
\notag
\end{align}

\noi
By  the triangle inequality, we have
$\jb{|j|+|k|}\lesssim \jb{j}\jb{j-k}$.
Combining this with a trivial bound
$\jb{|j|+|k|}^{-1}\lesssim \jb{j}^{-1}$, we obtain
\begin{align}
\jb{|j|+|k|}^\om&\lesssim \jb{j}^\om \jb{j-k}^{\om_+},
\label{A4}
\end{align}

\noi
where $\om_+ = \max(\om, 0)$.
Similarly, we have
\begin{align}
\jb{|j|+|\l|}^\om&\lesssim \jb{j}^\om \jb{j-\l}^{\om_+}.
\label{A4a}
\end{align}

\noi
By a version of Peetre's inequality \cite{Tr}, we have
\begin{align}
\jb{j}^{s_i+\om}&\lesssim
\min\big(\jb{k}^{s_i+\om} \jb{j-k}^{|s_i+\om|},
 \jb{\l}^{s_i+\om} \jb{j-\l}^{|s_i+\om|}\big), \quad i = 1, 2.
\label{A4b}
\end{align}

\noi
In view of the condition $N >N_0$, write $2N = N_1 + N_2$,
where
\begin{align}
N_i > d + \om_+ + |s_i + \om|, \quad i = 1, 2.
\label{A4c}
\end{align}

\noi
Then,  from \eqref{A3}, \eqref{A4}, \eqref{A4a},   and \eqref{A4b}
with a trivial bound
$\jb{j- k}^{N_1} \jb{j-\l}^{N_2} \le \jb{|j-k|+|j-\l|}^{2N}$,
we have
\begin{align}
\|\cT_{\Si}(f, g)\|_{\l^r_{s_1+s_2}}\lesssim
\|\Si\|_{\om, N}\bigg(\sum_{j\in\zd}(a_jb_j)^r\bigg)^{\frac 1r},
\label{A5}
\end{align}
where
\begin{align*}
a_j& =\sum_{k\in\zd} \jb{j-k}^{\om_+ + |s_1+\om| -N_1} F_k, \\
b_j &  =\sum_{\l \in\zd} \jb{j-\l}^{\om_+ +|s_2+\om| -N_2} G_\l.
\end{align*}

Let $a=\{a_j\}_{j\in\zd}$ and $b=\{b_j\}_{j\in\zd}$.
Then, it follows from \eqref{A5}, H\"older's inequality, Young's inequality (for a discrete convolution),
and \eqref{A4c}  that
\begin{align*}
\|\cT_{\Si}(f, g)\|&_{\l^r_{s_1+s_2}}
\lesssim \|\Si\|_{\om, N}\|a\|_{\elp}\|b\|_{\elq}\\
&=  \|\Si\|_{\om, N}\|\jb{\,\cdot\,}^{\om_++|s_1+\om|-N_1}*F\|_{\elp}
\|\jb{\,\cdot\,}^{\om_++|s_2+\om|-N_2}*G\|_{\elq}\\
&\lesssim \|\Si\|_{\om, N}\|f\|_{\l^p_{s_1+\om}}\|g\|_{\l^q_{s_2+\om}}.
\end{align*}

\noi
This proves Proposition \ref{PROP:1}.
\end{proof}

When $\om = 0$, we have the following corollary.

\begin{corollary}\label{COR:2}
Let $s_1, s_2\in\br$.
Suppose that $\Si$ satisfies \eqref{A2} for some $N > d
+ \frac12 \big\{|s_1|+ |s_2|\big\}$. Then, $\cT_\Si$ is a bounded bilinear operator from $\l^p_{s_1}(\zd)\times \l^q_{s_2}(\zd)$ to $\l^r_{s_1+s_2}(\zd)$
 for any $1\leq p, q, r\leq\infty$ with  $\frac{1}{p}+\frac{1}{q}=\frac{1}{r}$.
 In particular, if $\Si$ satisfies~\eqref{A2} for some $N > d$, then $\cT_{\Si}$
 is bounded from $\elp(\zd)\times\elq(\zd)$ to $\elr(\zd)$.
\end{corollary}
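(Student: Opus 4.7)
The plan is to deduce the corollary as a direct specialization of Proposition \ref{PROP:1} to the case $\om = 0$, with essentially no new work. First, I would observe that when $\om = 0$ the weights $\jb{|j|+|k|}^\om$ and $\jb{|j|+|\l|}^\om$ in the numerator of \eqref{A1} both collapse to $1$, so the quantity $\|\Si\|_{0, N}$ reduces exactly to
\[
\sup_{j, k, \l\in\zd}|\Si(j, k, \l)|\jb{|j-k|+|j-\l|}^{2N},
\]
which is the expression appearing in \eqref{A2}. Thus the hypothesis \eqref{A2} assumed in the corollary is nothing other than the hypothesis $\|\Si\|_{\om, N} < \infty$ of Proposition \ref{PROP:1} in the special case $\om = 0$.

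Next, I would check that the threshold $N_0$ in \eqref{N0} simplifies correctly. With $\om = 0$, one has $\om_+ = 0$ and $|s_i + \om| = |s_i|$ for $i = 1, 2$, so
\[
N_0(d, 0, s_1, s_2) = d + \tfrac{1}{2}\{|s_1| + |s_2|\},
\]
which matches precisely the lower bound on $N$ assumed in the corollary. Similarly, since $s_i + \om = s_i$, the source spaces $\l^p_{s_1 + \om}$ and $\l^q_{s_2 + \om}$ in Proposition \ref{PROP:1} become $\l^p_{s_1}(\zd)$ and $\l^q_{s_2}(\zd)$, while the target space $\l^r_{s_1 + s_2}(\zd)$ is unchanged. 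Invoking Proposition \ref{PROP:1} with $\om = 0$ then yields the first conclusion of the corollary verbatim, for any H\"older triple $(p, q, r)\in[1, \infty]^3$ with $\tfrac{1}{p}+\tfrac{1}{q} = \tfrac{1}{r}$.

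For the ``in particular'' assertion, I would further specialize by taking $s_1 = s_2 = 0$. Then the threshold $N > d + \tfrac{1}{2}\{|s_1| + |s_2|\}$ reduces to $N > d$, and the weighted spaces degenerate as $\l^p_0 = \elp(\zd)$, $\l^q_0 = \elq(\zd)$, and $\l^r_0 = \elr(\zd)$. Applying the first part of the corollary in this case produces the claimed boundedness of $\cT_\Si: \elp(\zd) \times \elq(\zd) \to \elr(\zd)$. I do not foresee any obstacle in executing this plan: the entire corollary is a piece of bookkeeping, recording the particularly clean form that Proposition \ref{PROP:1} takes when $\om = 0$, together with the further simplification that occurs when no weights are present on the sequence spaces either.
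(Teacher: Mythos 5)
Your proposal is correct and matches the paper exactly: the paper presents Corollary \ref{COR:2} as the immediate specialization of Proposition \ref{PROP:1} to $\om = 0$ (and then $s_1 = s_2 = 0$ for the unweighted case), with no additional argument needed. The bookkeeping you carry out — identifying $\|\Si\|_{0,N}$ with \eqref{A2} and simplifying $N_0$ — is precisely the intended content.
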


\begin{remark} \rm
(i)
We can slightly change the norm in \eqref{A1} on the infinite tensor $\Si$ and impose
a  more general condition $\|\Si\|_{\om_1, \om_2, N}<\infty$ for $\om_1, \om_2\in\br$ and some appropriately large $N$, where now
$$\|\Si\|_{\om_1, \om_2, N}:=\sup_{j, k, \l\in\zd}\frac{|\Si(j, k, \l)| \jb{|j-k|+|j-\l|}^{2N}}
{\jb{|j|+|k|}^{\om_1}\jb{|j|+|\l|}^{\om_2}}.$$

\noi
Essentially the same argument as in the  proof of  Proposition~\ref{PROP:1}
  yields  boundedness of the corresponding bilinear operator $\cT_{\Si}: \l^p_{s_1+\om_1}(\zd)\times \l^q_{s_2+\om_2}(\zd)\to \l^r_{s_1+s_2}(\zd).$

\smallskip

\noi
(ii)
From the perspective of \cite[Definition 2.1]{fg}, it is also natural to consider the following norm on the infinite tensor $\Si$:
\beq
\label{order00-seminorm}
||\Si||_{0, 0, \om, N}:=\sup_{j, k, \l\in\zd}\frac{|\Si(j, k, \l)|\jb{|j - k|+|j-\l|}^{2N}}{\jb{|j|+|k|+|\l|}^\om}.
\eeq
We will comment further on \eqref{order00-seminorm}
in Section \ref{SEC:5}.

\end{remark}

\begin{remark} \rm
We wish to end this section by discussing the appropriateness of the condition $\|\Si\|_{\om, N}<\infty$ in Proposition~\ref{PROP:1}. As mentioned in the introduction, the insight in \cite{C} had to do with finding an appropriate notion of order $\om$ for an infinite matrix $\sigma:\zd\times\zd\to\mathbb C$. This notion requires further defining the finite difference operator $\Delta^\alpha, \alpha\in\zd$;
see Section \ref{SEC:5}. However, for the purposes
of boundedness of the associated (linear) operator $\mathcal L_\sigma: \l^2_s(\zd)\to \l^2_{s-\om}(\zd)$,
defined in \eqref{L1},  we only need
 the condition
\beq
\label{linear-cond}
|\sigma (j, k)|\lesssim \jb{|j|+|k|}^\om \jb{j-k}^{-M}
\eeq
for all $j, k\in\zd$ and for some $M$ sufficiently large;
see \cite[Lemma 2.2]{fg}.
A natural way to bilinearize such a linear operator is to consider the tensor operator $\cT_{\Si}=\mathcal L_{\sigma_1}\otimes\mathcal L_{\sigma_2}$ with $\sigma_1, \sigma_2$ satisfying
the condition  \eqref{linear-cond}; that is, for all $j\in\zd$, we set
\begin{align*}
(\cT_\Si (f, g))_j&=(L_{\sigma_1}(f))_j(L_{\sigma_2}(g))_j\\
&=\sum_{k\in\zd}\sigma_1(j, k)f_k\sum_{\l\in\zd}\sigma_2(j, \l)g_\l\\
&=\sum_{k,\l\in\zd}\underbrace{\sigma_1(j, k)\sigma_2(j, \l)}_{=: \, \Si(j, k, \l)}f_kg_\l.
\end{align*}

\noi
It is easy to see that, if  $\sigma_1$ and $\sigma_2$ satisfy \eqref{linear-cond},
then $\Si$ in the last summation above satisfies $\|\Si\|_{\om, N}<\infty$
in \eqref{A1} with $N = \frac M2$.
\end{remark}

\section{Smoothing of discrete bilinear commutators}

Bilinear commutators are natural objects to consider within harmonic analysis.
Given a bilinear operator $T$ and an appropriate function $b$, one can consider the following {\it bilinear commutators}:
\begin{align}
\begin{split}
[T, b]_1(f, g)&= T(bf,g)-bT(f,g),\\
[T, b]_2 (f, g)&=T(f,bg)-bT(f,g).
\label{BB1}
\end{split}
\end{align}
Just as their linear counterparts, these commutators have a smoothing effect. More precisely, it was shown in \cite{BT13} that if $T$ is a bilinear Calder\'on-Zygmund operator and $b\in CMO(\R^d)$, then $[T, b]_i$, $i=1, 2$,  are compact bilinear operators from $L^p(\R^d)\times L^q(\R^d)\to L^r(\R^d)$ for all
$1<p,q<\infty$ and  $1\leq r <\infty$, satisfying  $\frac 1p + \frac 1q  = \frac 1r$.
See \cite{BO2} for a related discussion.
  The definition of a bilinear compact operator is most natural; given three normed spaces $X, Y,$ and $ Z$,
  we say that a bilinear operator $T: X\times Y\to Z$ is \emph{compact} if the set $\{T(f, g): \|f\|_X, \|g\|_Y\leq 1\}$ is pre-compact in $Z$.  Clearly, compactness of $T$ implies the continuity of $T$. The proof of the compactness statement mentioned above and other subsequent compactness results in the literature make use of the so-called Fr\'echet-Kolmogorov-Riesz theorem which provides a characterization of pre-compactness in the $L^p$-spaces; see, for example, Yosida's book \cite{Yo}. The version of this theorem for the $\l^p$ spaces \cite[Theorem 4]{HOH} reads as follows.

\begin{lemma}
\label{lp-compact}
A subset $\mathcal F\subset \l^p(\Z^d) , 1\leq p<\infty,$ is totally bounded if and only if the following two conditions are satisfied:


\vspace{0.5mm}

\begin{itemize}
\item[\textup{(i)}]	$\F$ is pointwise bounded,


\vspace{0.5mm}

\noi
\item [\textup{(ii)}]
Given any  $\eps>0$,  there exists $j_0 \in\mathbb N$ such that
\[\bigg(\sum_{|j|>j_0}|f_j|^p\bigg)^\frac 1p <\eps\]

\noi
for any $f=\{f_j\}_{j\in\zd}\in\mathcal F$.

\end{itemize}

\end{lemma}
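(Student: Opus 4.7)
The plan is to follow the standard Fréchet--Kolmogorov strategy, adapted to the discrete setting where the usual equicontinuity-in-translation hypothesis from the continuous version becomes vacuous because $\Z^d$ is discrete (translation is already an isometry by an amount bounded below). Thus only the boundedness condition (i) and the uniform tail condition (ii) are needed.

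For the necessity direction, total boundedness of $\mathcal F\subset \l^p(\Z^d)$ implies norm-boundedness, from which $|f_j|\le \|f\|_{\l^p}$ immediately yields pointwise boundedness, i.e.\ (i). For (ii), fix $\eps>0$ and cover $\mathcal F$ by finitely many $\l^p$-balls of radius $\eps/2$ centered at $f^{(1)},\dots,f^{(M)}\in\mathcal F$. Since each $f^{(i)}$ lies in $\l^p$, one can choose a single $j_0$ so large that $\big(\sum_{|j|>j_0}|f^{(i)}_j|^p\big)^{1/p}<\eps/2$ for every $i=1,\dots,M$. Given $f\in\mathcal F$, pick the nearest $f^{(i)}$ and apply the triangle inequality restricted to the index set $\{|j|>j_0\}$ to conclude $\big(\sum_{|j|>j_0}|f_j|^p\big)^{1/p}<\eps$.

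For the sufficiency direction, the essential idea is a finite-dimensional reduction. Given $\eps>0$, use (ii) to choose $j_0$ so that the tail mass past $|j|>j_0$ is uniformly less than $\eps/4$ for every $f\in\mathcal F$. Consider the restriction map $\pi\colon\mathcal F\to \C^{B_{j_0}}$, where $B_{j_0}=\{j\in\Z^d:|j|\le j_0\}$ is finite. By (i), $\pi(\mathcal F)$ is a bounded subset of the finite-dimensional normed space $(\C^{B_{j_0}}, \|\cdot\|_{\l^p(B_{j_0})})$, hence totally bounded by Heine--Borel. Choose an $(\eps/4)$-net $\{g^{(1)},\dots,g^{(M)}\}$ for $\pi(\mathcal F)$, and for each $g^{(i)}$ pick any preimage $f^{(i)}\in\mathcal F$.

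Finally, given $f\in\mathcal F$, select $f^{(i)}$ with $\|\pi(f)-\pi(f^{(i)})\|_{\l^p(B_{j_0})}<\eps/4$. Splitting
\[
\|f-f^{(i)}\|_{\l^p}\le \|\pi(f)-\pi(f^{(i)})\|_{\l^p(B_{j_0})}+\|\mathbf 1_{|j|>j_0}f\|_{\l^p}+\|\mathbf 1_{|j|>j_0}f^{(i)}\|_{\l^p}
\]
gives $\|f-f^{(i)}\|_{\l^p}<\eps/4+\eps/4+\eps/4<\eps$, so $\{f^{(1)},\dots,f^{(M)}\}$ is a finite $\eps$-net for $\mathcal F$. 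There is no real obstacle: the argument is entirely elementary, and the main point to keep track of is the bookkeeping of the three constants produced by the splitting into interior and tail contributions. The reason the continuous equicontinuity hypothesis is absent here is precisely that finite subsets of $\Z^d$ are compact, so the finite-dimensional reduction is immediate once the tails are controlled.
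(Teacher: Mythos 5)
Your proof is correct. Note, however, that the paper does not prove this lemma at all: it is quoted verbatim as Theorem 4 of the cited Hanche-Olsen--Holden paper on the Kolmogorov--Riesz compactness theorem, so there is no in-paper argument to compare against. Your argument is the standard one for the discrete setting: necessity via a finite $\eps/2$-net whose centers have uniformly small tails, and sufficiency via the finite-dimensional reduction to $\C^{B_{j_0}}$ combined with Heine--Borel. The bookkeeping is sound --- in particular the split $\|f-f^{(i)}\|_{\l^p}\le\|\pi(f)-\pi(f^{(i)})\|_{\l^p(B_{j_0})}+\|\ind_{|j|>j_0}f\|_{\l^p}+\|\ind_{|j|>j_0}f^{(i)}\|_{\l^p}$ is valid because $(A^p+B^p)^{1/p}\le A+B$ for $p\ge1$, and pointwise boundedness suffices to make $\pi(\mathcal F)$ bounded since $B_{j_0}$ is finite. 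Your remark about why the translation-equicontinuity hypothesis of the continuous Fr\'echet--Kolmogorov theorem disappears in $\Z^d$ is also the right way to think about it.
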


We now define $\l^\infty_c(\Z^d)$ and $c_0(\Z^d)$ by
\begin{align*}
\l_c^\infty(\Z^d) &  =\big\{ \{b_k\}_{k \in \Z^d}:
b_k = 0 \text{ for  all but a finite number of $k$}\}, \\
c_0(\Z^d) & = \big\{ \{b_k\}_{k \in \Z^d}: b_k \to 0 \text{ as } |k| \to \infty\big\}.
\end{align*}

\noi
Recall from \cite[Example III.1.3]{RS1} that
$c_0 (\Z^d)$ is the completion of $\l^\infty_c(\Z^d)$
with respect to the $\l^\infty$-norm.
In the following, we work with $c_0(\Z^d)$.
In the context of discrete bilinear operators,
 this is a natural space to study commutators with a sequence~$b$ if we recall that in the continuous case, the space  $CMO(\R^d)$ is the closure of $C^\infty_c(\rd)$ with respect to  the $BMO$ (bounded mean oscillations)-norm.
 Our main result is the following.

\begin{theorem}
\label{THM:3}
Suppose that  $\Si$ satisfies~\eqref{A2} for some $N > d$ and $b\in c_0(\zd)$.
Let $\cT_\Si$ be as in \eqref{db1}.
Then, the bilinear commutators
$[\cT_{\Si}, b]_i$, $i=1, 2$, defined as in \eqref{BB1},   are compact bilinear operators from $\elp(\zd)\times\elq(\zd)$ to $\elr(\zd)$
 for any $1\leq p, q\leq\infty$ and $1\leq r<\infty$ with  $\frac{1}{p}+\frac{1}{q}=\frac{1}{r}$.
\end{theorem}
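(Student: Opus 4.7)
The plan is to mirror the standard compactness-of-commutators strategy from the continuous setting (compare \cite{BT13}), adapted to discrete sequence spaces. Specifically, I will approximate $b\in c_0(\zd)$ by finitely supported sequences in $\ell^\infty$-norm, reduce via operator-norm density to the case when $b$ has finite support, and then verify the Fr\'echet--Kolmogorov--Riesz criterion provided by Lemma~\ref{lp-compact}.

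For the reduction, I use the fact (noted just before the theorem) that $c_0(\zd)$ is the $\ell^\infty$-closure of $\ell^\infty_c(\zd)$, so I can choose $b^{(n)}\in\ell^\infty_c(\zd)$ with $\|b-b^{(n)}\|_{\ell^\infty}\to 0$. Corollary~\ref{COR:2} yields boundedness of $\cT_\Si:\elp(\zd)\times\elq(\zd)\to\elr(\zd)$, which combined with the trivial bound $\|(b-b^{(n)})h\|_{\ell^s}\le\|b-b^{(n)}\|_{\ell^\infty}\|h\|_{\ell^s}$ gives
\[
\big\|[\cT_\Si,b]_i(f,g)-[\cT_\Si,b^{(n)}]_i(f,g)\big\|_{\elr}\lesssim \|b-b^{(n)}\|_{\ell^\infty}\|f\|_{\elp}\|g\|_{\elq}\to 0,
\]
i.e.\ operator-norm convergence $[\cT_\Si,b^{(n)}]_i\to[\cT_\Si,b]_i$. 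The standard fact that the space of compact bilinear operators (into a Banach space) is closed under operator-norm limits---proved by the same diagonal-extraction argument as in the linear case---then reduces the theorem to the case $\supp b\subset S\subset\{k\in\zd:|k|\le M\}$ with $|S|<\infty$.

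For this finite-support case, I write $[\cT_\Si,b]_1=\cT_{\widetilde\Si}$ with tensor $\widetilde\Si(j,k,\l):=\Si(j,k,\l)(b_k-b_j)$, which vanishes unless $j\in S$ or $k\in S$, and apply Lemma~\ref{lp-compact} to $\mathcal{F}=\{\cT_{\widetilde\Si}(f,g):\|f\|_{\elp},\|g\|_{\elq}\le 1\}$. Condition (i)---pointwise boundedness---is immediate from $\|\cdot\|_{\ell^\infty}\le\|\cdot\|_{\elr}$ (valid since $r<\infty$) together with continuity of $\cT_{\widetilde\Si}$. For condition (ii), take $j_0>2M$, so that $|j|>j_0$ forces $j\notin S$, hence $b_j=0$ and $\widetilde\Si(j,k,\l)=b_k\,\Si(j,k,\l)$ is supported in $k\in S$. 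Using $|j-k|\ge|j|/2$ for $k\in S$, I split the almost-diagonal bound \eqref{A2} as
\[
\jb{|j-k|+|j-\l|}^{-2N}\le\jb{|j|/2}^{-N}\jb{j-\l}^{-N},
\]
bound $\sum_{k\in S}|f_k|\lesssim_{S}\|f\|_{\elp}$ crudely (finite-support H\"older), apply Young in $\l$ (valid since $N>d$ makes $\jb{\cdot}^{-N}\in\ell^1$), and close with H\"older in $j$ to obtain an $\ell^r$-tail of order $\|\jb{|j|/2}^{-N}\|_{\ell^a(|j|>j_0)}\|f\|_{\elp}\|g\|_{\elq}$, where $\tfrac1a=\tfrac1r-\tfrac1q=\tfrac1p$. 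Since $N>d\ge d/a$, this tail tends to $0$ uniformly as $j_0\to\infty$. The analysis of $[\cT_\Si,b]_2$ is entirely symmetric, with $\widetilde\Si(j,k,\l)=\Si(j,k,\l)(b_\l-b_j)$ and the roles of $k$ and $\l$ swapped.

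The main technical obstacle I anticipate is organizing the H\"older/Young decomposition uniformly across all admissible exponent triples $(p,q,r)$, especially the endpoints $p=\infty$, $q=\infty$, or $r=1$. This should be largely bookkeeping: the hypothesis $N>d$ delivers $\ell^1$-summability of $\jb{\cdot}^{-N}$ (so Young's inequality is available with any target summability), and the finiteness of $S$ keeps the $k$-sum trivial for every $p$. The restriction $r<\infty$ is exactly what Lemma~\ref{lp-compact} requires, matching the hypothesis of the theorem.
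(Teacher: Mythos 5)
Your proposal is correct and follows essentially the same route as the paper: reduce to finitely supported $b$ via the $\ell^\infty$-density of $\l^\infty_c(\zd)$ in $c_0(\zd)$ together with the uniform operator bound from Corollary~\ref{COR:2}, then verify condition (ii) of Lemma~\ref{lp-compact} by exploiting that for $|j|\gg M$ only the $b_k$ term survives and the almost-diagonal decay \eqref{A2} forces $|j-k|\gtrsim |j|$, yielding a quantitative $\ell^r$-tail bound. The only differences are cosmetic bookkeeping in how the Young/H\"older steps are arranged (the paper simply bounds the $\l$-sum by $\|g\|_{\ell^\infty}\le\|g\|_{\ell^q}$ and sums $\jb{j}^{-N}$ in $\ell^r$).
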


\begin{proof}
We will only consider the first commutator $[\cT_{\Si}, b]_1$.
The calculations for $[\cT_{\Si}, b]_2$ are similar.
Given any $b\in\l^\infty(\Z^d)$, $f\in\l^p(\Z^d)$, and $g\in\l^q(\Z^d)$,
it follows from  Corollary~\ref{COR:2} (with $N > d$)
and H\"older's inequality that
\begin{align}
\|[\cT_{\Si}, b]_1(f, g)\|_{\l^r}\lesssim \|b\|_{\l^\infty}\|f\|_{\l^p}\|g\|_{\l^q}.
\label{BB2}
\end{align}

\noi
That is, $[\cT_{\Si}, b]_1: \l^p(\zd)\times \l^q(\zd)\to\l^r(\zd)$ is bounded with  the operator norm $\|[\cT_{\Si}, b]_1\|\lesssim \|b\|_{\l^\infty}$. In particular, this proves condition (i) in Lemma~\ref{lp-compact}.
Hence, it remains to prove the condition (ii).
Without loss of generality, assume $b \ne 0$.
Let $f = \{f_k\}_{k \in \Z^d}$ and $g = \{g_k\}_{k \in \Z^d}$
such that $\|f\|_{\l^p},  \|g\|_{\l^q}\leq 1$.
Then, our goal is to show that, given $\eps > 0$,
there exists $j_0=  j_0(\eps) \in \N$ (independent of $f$ and $g$) such that
\begin{equation}
\label{BB3}
\bigg(\sum_{|j|>j_0}\big|([\cT_{\Si}, b]_1(f, g))_j\big|^r\bigg)^\frac 1r \lesssim\eps.
\end{equation}

\noi
In view of \eqref{BB2}
and the density of $\l^\infty_c(\Z^d)$ in $c_0(\Z^d)$
with respect to the $\l^\infty$-norm,
it suffices to prove \eqref{BB3}
for $b \in \l^\infty_c(\Z^d)$.
Thus, we assume that there exists $j_1\in\mathbb N$ such that
\begin{align}
b_j= 0
\label{BB4}
\end{align}

\noi
for any $|j| > j_1$.

Formally, we have
\[
([\cT_{\Si}, b]_1(f, g))_j=\sum_{k\in\zd}\sum_{\l\in\zd}\Si(j, k, \l)(b_k-b_j)f_kg_{\l}.
\]
It follows from \eqref{BB4}, \eqref{A2}, and Young's and H\"older's inequalities that, for $|j| > j_ 0 \gg j_1$, we have
\begin{align*}
|([\cT_{\Si}, b]_1(x, y))_j|&\leq \sum_{k\in\zd}\sum_{\l\in\zd}
\ind_{|k|\le j_1} \cdot
|\Si(j, k, \l)||b_k||f_k||g_{\l}|\\
&\lesssim \|b\|_{\l^\infty}\sum_{|k|\le j_1}\frac{|f_k|}{\jb{j-k}^N}
 \sum_{\l\in\zd}
\frac{|g_\l|}{\jb{j-\l}^N}\\
&\lesssim \|b\|_{\l^\infty}\jb{j}^{-N} \Big(\sum_{|k|\leq j_1}|f_k|\Big)
\|g\|_{\l^\infty}\\
&\lesssim \|b\|_{\l^\infty}\jb{j}^{-N} j_1^{\frac d{p'}} \|f\|_{\l^p}
\|g\|_{\l^q}.
\end{align*}

\noi
Hence, we have
\begin{align*}
\big\| \ind_{|j|  >j_0}\cdot ([\cT_{\Si}, b]_1(f, g))_j\big\|_{\l^r}
& \les \|b\|_{\l^\infty}
j_1^{\frac d{p'}}
\| \ind_{|j|  >j_0} \cdot \jb{j}^{-N} \|_{\l^r}\\
& \les \|b\|_{\l^\infty}
j_1^{\frac d{p'}} \jb{j_0}^{-N + \frac d{r}}
< \eps
\end{align*}

\noi
by choosing $j_0 \gg j_1$; in the calculations above, $\ind_X$ denotes the characteristic function of the set $X$. This proves \eqref{BB3} and therefore completes the proof of Theorem~\ref{THM:3}.
\end{proof}

\section{Tensors of order $\omega$}
\label{SEC:5}


In Section~\ref{SEC:3}, we studied  the boundedness property
of the discrete bilinear operator $\cT_\Si$ for an infinite tensor $\Si:\zd\times\zd\times\zd\to\C$ with a finite $\|\cdot\|_{\om, N}$-norm defined in
\eqref{A1}.
In this section, we seek for an analogous definition for infinite tensors as the one given in \cite{C} for infinite matrices, and possible examples of such tensors. Following \cite{C}, we first define   partial finite difference operators on the set of infinite
tensors  $\{\Si(j, k, l)\}_{(j, k, l)\in\mathbb Z^{3d}}$; see also \cite[Definition 2.1]{fg}. For $m\in\{1,\dots, d\}$, let $e_m=\{\delta_{mn}\}_{n\in\zd},$ where $\delta_{mn}$ is the Kronecker symbol.
Given a tensor $\Si$, we denote by $\Si_2^{m, +}$ and $\Si_2^{m, -}$ the shifted  tensors defined by
$$\Si_2^{m, +}(j, k, \l)=\Si(j+e_m, k+e_m, \l)
\quad \text{and}\quad \Si_2^{m, -}(j, k, \l)=\Si(j-e_m, k-e_m, \l).$$

\noi
Then, we  define the partial finite difference operators $\Dl_2^{m, +}$ and $\Dl_2^{m, -}$ acting on $\Si$:
$$\Delta_2^{m, +}\Si:=\Si_2^{m, +}-\Si
\quad \text{and}\quad \Delta_2^{m, -}\Si:=\Si_2^{m, -}-\Si.$$

\noi
Similarly, we define $\Si_3^{m, \pm}$ and $\Delta_3^{m, \pm}\Si$ by
$$\Si_3^{m, \pm}(j, k,\l)=\Si(j\pm e_m, k, \l\pm e_m)
\quad \text{and} \quad \Delta_3^{m, \pm}\Si:=\Si_3^{m, \pm}-\Si.$$

Let $i = 2, 3$.
Then, for $t\in\bz$,  we set  $\Delta_{i, m}^t=(\Delta_i^{m, \text{sign}(t)})^{|t|}$ and $\Delta_{i, m}^0=\text{Id}$. Finally, for $\alpha=\{\alpha_m\}_{m=1}^d\in\zd$,
we set
$$\Delta_{i}^\alpha=\Delta_{i, 1}^{\alpha_1}\cdots\Delta_{i, d}^{\alpha_d}.$$

\begin{definition}
\label{discrete-CM} \rm
Let $\om\in\br$ and $N \in \N$. We say that $\Si$ belongs to the class $BT^{\om, N}(\zd)$ if, for all $\alpha, \beta\in\zd$,  there exists $C_{N, \al, \be} > 0$ such that
\beq
\label{dis-horm}
|\Delta_2^\alpha\Delta_3^\beta \Si(j, k, \l)|\le C_{N, \alpha, \beta}
\jb{|j|+|k|+|\l|}^{\om-|\alpha|-|\beta|}\jb{|j-k| + |j-\l|}^{-2N}
\eeq
\noi
for any  $j, k, \l\in\zd$.

\end{definition}


Clearly, given $N \in\N$, the bilinear tensor classes $BT^{\om, N}(\Z^d)$ of order $\omega$ are nested, that is, if $\omega_1\leq\omega_2$, then $BT^{\omega_1, N}(\zd)\subseteq BT^{\omega_2, N}(\zd)$.
Note that $BT^{\om, N}(\zd)$ is a Fr\'echet space under the family of semi-norms:
\beq
\label{orderab-seminorm}
\|\Si\|_{\alpha, \beta, \om, N}:=\sup_{j, k, \l\in\zd}
\frac{|\Dl_2^\al \Dl_3^\be\Si(j, k, \l)| \jb{|j-k|+|j-\l|}^{2N}}{\jb{|j|+|k|+|\l|}^{\om-|\alpha|-|\beta|}}
\eeq

\noi
for  $\alpha, \beta\in\zd$.
When  $\alpha =  \beta = 0$ (as elements in $\zd$), \eqref{orderab-seminorm}
reduces to the expression in~\eqref{order00-seminorm}.
Let us point out that, even when $\al = \be = 0$,
 \eqref{order00-seminorm} and  \eqref{A1} are, in general,  not comparable. However, we have the following estimates:

 \smallskip
\begin{itemize}
\item If $\om\geq 0$, then $\|\Si\|_{0, 0, 2\om, N}\lesssim \|\Si\|_{\om, N}\lesssim\|\Si\|_{0,0, \om, N},$

 \smallskip

\item If $\om\leq 0$, then $\|\Si\|_{0, 0, \om, N}\lesssim \|\Si\|_{\om, N}\lesssim\|\Si\|_{0,0, 2\om, N}.$
\end{itemize}

 \smallskip

\noi
An immediate consequence of Proposition~\ref{PROP:1} is given below.

\begin{corollary}
\label{COR:3}
Given  $s_1, s_2, \omega\in\br$ and $N \in \N$,
suppose that

\smallskip

\begin{itemize}
\item if $\om \ge 0$, then
$\Si\in BT^{\om, N}(\zd)$ for some $N > N_0$, where $N_0$
is as in \eqref{N0},

\smallskip

\item if $\om <  0$, then
$\Si\in BT^{2\om, N}(\zd)$ for some $N > N_0$.

\end{itemize}

\noi
Let $\cT_\Si$ be as in \eqref{db1}. Then,
$\cT_\Si$ is a bounded bilinear operator from $\l^p_{s_1+\om}(\zd)\times \l^q_{s_2+\om}(\zd)$ to $\l^r_{s_1+s_2}(\zd)$
for any  $1\leq p, q, r\leq\infty$ with  $\frac{1}{p}+\frac{1}{q}=\frac{1}{r}$.
In particular, if
we define the bilinear tensor class $BT^0(\zd)$ of order zero by
\[BT^0(\zd) := \bigcup_{N > d} BT^{0, N}(\zd), \]
then $\cT_\Si$ is a bounded bilinear operator from $\l^p(\zd)\times \l^q(\zd)$ to $\l^r(\zd)$
for any $\Si\in BT^0(\zd)$.
\end{corollary}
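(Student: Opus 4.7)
The plan is to deduce Corollary \ref{COR:3} directly from Proposition \ref{PROP:1} by relating the $BT^{\omega', N}$ membership to the finiteness of the single-norm quantity $\|\Si\|_{\omega, N}$ defined in \eqref{A1}. The first step is to specialize the tensor condition \eqref{dis-horm} to the case $\alpha = \beta = 0$, which immediately yields
\[
\|\Si\|_{0, 0, \om', N} < \infty,
\]
where $\om' = \om$ when $\om \geq 0$ and $\om' = 2\om$ when $\om < 0$, and $\|\,\cdot\,\|_{0, 0, \om', N}$ is the seminorm from \eqref{order00-seminorm}.

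Next I would invoke the elementary comparison estimates between $\|\,\cdot\,\|_{\om, N}$ and $\|\,\cdot\,\|_{0, 0, \om', N}$ stated just after \eqref{orderab-seminorm}. Specifically, when $\om \geq 0$ the bound $\|\Si\|_{\om, N} \lesssim \|\Si\|_{0, 0, \om, N}$ follows from $\jb{|j| + |k| + |\l|} \lesssim \jb{|j| + |k|} \jb{|j| + |\l|}$ and $\om \geq 0$; when $\om < 0$ the bound $\|\Si\|_{\om, N} \lesssim \|\Si\|_{0, 0, 2\om, N}$ follows from the reverse observation $\jb{|j| + |k|} \jb{|j| + |\l|} \leq \jb{|j| + |k| + |\l|}^{2}$, which after raising to the negative exponent $\om$ flips the inequality. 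In either case we conclude
\[
\|\Si\|_{\om, N} < \infty.
\]

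The third and final step is to apply Proposition \ref{PROP:1} with the given $s_1, s_2, \om$: since $N > N_0$ by hypothesis, that proposition yields boundedness of $\cT_\Si$ from $\l^p_{s_1 + \om}(\zd) \times \l^q_{s_2 + \om}(\zd)$ to $\l^r_{s_1 + s_2}(\zd)$ for the full range of H\"older triples. For the last assertion about $BT^0(\zd)$, I would just unpack the definitions: setting $s_1 = s_2 = \om = 0$ one computes $N_0 = d$ from \eqref{N0}, so the hypothesis becomes $\Si \in BT^{0, N}(\zd)$ for some $N > d$, i.e., $\Si \in BT^0(\zd)$, yielding boundedness $\l^p \times \l^q \to \l^r$.

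There is essentially no main obstacle here, as the paper itself advertises this as an immediate consequence of Proposition \ref{PROP:1}; the only thing that requires any thought is making sure the sign-dependent split in the hypothesis ($BT^{\om, N}$ for $\om \geq 0$ versus $BT^{2\om, N}$ for $\om < 0$) lines up with the correct direction in the comparison estimates, which is precisely how those two cases in the hypothesis were tailored.
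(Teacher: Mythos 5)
Your proposal is correct and follows exactly the route the paper intends: the paper gives no separate proof, simply declaring the corollary an immediate consequence of Proposition \ref{PROP:1} via the norm comparisons $\|\Si\|_{\om,N}\lesssim\|\Si\|_{0,0,\om,N}$ (for $\om\ge 0$) and $\|\Si\|_{\om,N}\lesssim\|\Si\|_{0,0,2\om,N}$ (for $\om<0$) stated just before it. Your justification of those comparisons and the computation $N_0=d$ for $s_1=s_2=\om=0$ are both accurate.
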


Let us provide simple examples of bilinear tensors of order zero.
Consider the following tensor:
\[ \Si_1(j, k, \l)=\theta_j\cdot\ind_{j = k=\l}\cdot \ind_{|j|+|k|+|\l|\le M}\]

\noi
with $\{\theta_j\}_{j\in\zd}\in\l^{\infty}(\zd)$
and some $M \in\mathbb N$.  Then, it is easy to see that $\Si_1\in BT^0(\zd)$.
Next, given a function $\Phi: \Z^d \times \Z^d  \to \C$	
with a bound $|\Phi(x, y)| \les \jb{|x| + |y|}^{-N}$
for any $x, y \in \Z^d$ and
for some $N > d$,
consider
\[ \Si_2(j, k, \l)=\Phi(j - k, j - \l) \cdot\ind_{j = k+\l}.\]

\noi
Noting that $\Dl^\al_2 \Dl^\be_3 \Si_2 (j, k, \l) = 0$ unless $\al = \be = 0$,
we see that $\Si_2\in BT^0(\zd)$.

\medskip

We also state a compactness result
on the bilinear commutators of a discrete bilinear operator $\cT_\Si$
with
 $\Si\in BT^0(\zd)$ and $b\in c_0(\zd)$,
 which follows from Theorem~\ref{THM:3}
and Corollary~\ref{COR:3}.

\begin{corollary}
\label{COR:4}
Suppose that $\Si\in BT^0(\zd)$ and $b\in c_0(\zd)$.
Let $\cT_\Si$ be as in \eqref{db1}. Then, the bilinear commutators
$[\cT_{\Si}, b]_i$, $i=1, 2$, defined as in \eqref{BB1}, are compact bilinear operators from $\elp(\zd)\times\elq(\zd)$ to $\elr(\zd)$
for any $1\leq p, q\leq\infty$ and $1\leq r<\infty$ with  $\frac{1}{p}+\frac{1}{q}=\frac{1}{r}$.
\end{corollary}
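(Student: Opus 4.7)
The plan is to reduce Corollary \ref{COR:4} immediately to Theorem \ref{THM:3} by observing that membership in $BT^0(\zd)$ implies the pointwise tensor bound \eqref{A2} required by Theorem \ref{THM:3}. Indeed, from the very definition of $BT^0(\zd)$, if $\Si \in BT^0(\zd)$, then $\Si \in BT^{0, N}(\zd)$ for some $N > d$.

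Specializing \eqref{dis-horm} in Definition \ref{discrete-CM} to $\om = 0$ and $\alpha = \beta = 0 \in \zd$, I obtain
\begin{equation*}
|\Si(j, k, \l)| \le C_{N, 0, 0}\, \jb{|j|+|k|+|\l|}^{0}\, \jb{|j-k| + |j-\l|}^{-2N}
= C_{N, 0, 0}\, \jb{|j-k| + |j-\l|}^{-2N}
\end{equation*}
for all $j, k, \l \in \zd$. Equivalently, $\|\Si\|_{0, N} < \infty$ in the sense of \eqref{A2}, with this same $N > d$. Thus the hypothesis of Theorem \ref{THM:3} is satisfied.

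Having verified this, it remains only to invoke Theorem \ref{THM:3}: since $\Si$ fulfills \eqref{A2} for some $N > d$ and $b \in c_0(\zd)$, the bilinear commutators $[\cT_\Si, b]_i$, $i = 1, 2$, are compact bilinear operators from $\ell^p(\zd) \times \ell^q(\zd)$ to $\ell^r(\zd)$ for any $1 \le p, q \le \infty$ and $1 \le r < \infty$ with $\frac{1}{p} + \frac{1}{q} = \frac{1}{r}$. This concludes the proof of Corollary \ref{COR:4}. There is no genuine obstacle here; the statement is essentially a packaging of Theorem \ref{THM:3} in the language of the tensor class $BT^0(\zd)$ introduced in this section, with the only non-trivial observation being that the $\alpha = \beta = 0$ component of the $BT^{0,N}$-seminorm bound is precisely \eqref{A2}.
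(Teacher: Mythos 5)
Your proposal is correct and follows essentially the same route as the paper, which simply notes that Corollary \ref{COR:4} follows from Theorem \ref{THM:3} (together with Corollary \ref{COR:3}) without spelling out the details. The one observation you make explicit --- that the $\alpha=\beta=0$ instance of \eqref{dis-horm} with $\om=0$ is precisely the hypothesis \eqref{A2} with the same $N>d$ --- is exactly the intended reduction.
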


\begin{remark}[{\bf transposes}]\rm
The (formal) transposes of a (continuous) bilinear operator
$T:\mathcal S(\R^d)\times\mathcal S(\R^d)\to\mathcal S'(\R^d)$ are defined by the duality relations
\[
\langle T(f, g), h\rangle=\langle T^{*1}(h, g), f\rangle=\langle T^{*2}(f, h), g\rangle.
\]

\noi
If
 $T$ has kernel $K$ as in \eqref{kernel}, then its formal transposes
 $T^{*1}$ and $T^{*2}$ have kernels given by $K^{*1}(x, y, z)=K(y, x, z)$ and $K^{*2}(x, y, z)=K(z, y, x)$,
 respectively. In the discrete setting, for  $\cT_\Si:\elp(\Z^d)\times\elq(\Z^d)\to\elr(\Z^d)$
 with a discrete symbol $\Si$, using the natural duality pairing
\[\langle\cT (f, g), h\rangle=\sum_{j\in\zd}(\cT(f, g))_jh_j
= \sum_{j, k, \l\in\zd} \Si(j, k, \l) f_k \, g_\l \, h_j,\]

\noi
it is easy to see that,
given  $\Si\in BT^{\om, N} (\zd)$ for some $N \in \N$,
we have $(\cT_{\Si})^{*i}=\cT_{\Si^{*i}}$
with $\Si^{*i}\in BT^{\om, N} (\zd)$, 
$i=1, 2$,
where $\Si^{*1}(j, k, \l)  = \Si(k,j,  \l)$
and $\Si^{*2}(j, k, \l)  = \Si(\l, k, j)$.
This is not surprising considering that if $K$ is a bilinear Calder\'on-Zygmund kernel, then so are $K^{*i}, i=1, 2$; see also \cite[Theorem 2.1]{bmnt} for the symbolic calculus of the H\"ormader class $BS_{1, 0}^{\om}(\rd)$.
\end{remark}

We conclude this paper by presenting some other natural examples of infinite tensors belonging to the classes introduced in Definition~\ref{discrete-CM}.

\medskip

Given multi-indices $a, b \in(\bn\cup\{0\})^d$,
consider the following bilinear partial differential operator  on $\T^d$: 
\begin{align}
T_{a, b}(F, G) (x) =  \frac{\partial^a F}{\partial x^a}\frac{\partial^b G}{\partial x^b}, \quad x \in \T^d.
\label{D2}
\end{align}

\noi
By taking the Fourier transform,
we have
\begin{align*}
\F (T_{a, b}(F, G))(j) = \sum_{k \in \Z^d}\sum_{\l \in \Z^d} (2\pi i k)^a (2\pi i \l)^b \cdot \ind_{j = k+\l} \cdot f_k  \, g_\l ,
\end{align*}

\noi
where $f_k = \ft F(k)$ and $g_\l = \ft G(\l)$.
By setting
\begin{align}
\Si_{a, b}(j, k, \l) = (2\pi i k)^a(2\pi i \l)^b \cdot \ind_{j =  k+\l},
\label{D2a}
\end{align}
  we then have
$\cT_{\Si_{a, b}}(f, g) (j) = \F(T_{a, b}(F, G))(j)$, $j \in \Z^d$.
More generally, we can consider a tensor $\Si_\Phi$ of the form
\begin{align}
\Si_\Phi(j, k, \l) = \Phi(k, \l)\cdot  \ind_{j = k +  \l}.
\label{D3}
\end{align}

\begin{lemma}\label{LEM:X}
Let $\Phi \in C^\infty(\R^{d}\times \R^d; \C)$.
Suppose that  there exists $\om \in \R$
such that
\begin{align}
|\dd_x^\al \dd_y^\be \Phi(x, y)| \les \jb{|x| + |y|}^{\om - |\al| - |\be|}
\label{E0}
\end{align}

\noi
for any  multi-indices $\al, \be \in(\bn\cup\{0\})^d$ and $x, y \in \R^d$. Then, the tensor $\Si_\Phi$ defined in~\eqref{D3} belongs to $BT^{\om + 2N, N}(\Z^d)$ for any $N \in \N$. In particular, if $\Si_{a, b}$ is the tensor of the bilinear partial differential operator given  in~\eqref{D2a},
then $\Si_{a, b}\in BT^{|a| + |b| + 2N, N}(\Z^d)$.
\end{lemma}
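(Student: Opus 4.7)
The plan is to reduce the finite-difference estimate for $\Si_\Phi$ to a classical derivative estimate for $\Phi$, exploiting the support constraint $j = k + \l$ built into the indicator. The first and key observation is that each elementary shift $(j, k, \l) \mapsto (j \pm e_m, k \pm e_m, \l)$ and $(j, k, \l) \mapsto (j \pm e_m, k, \l \pm e_m)$ leaves the quantity $j - k - \l$ invariant, so $\ind_{j = k + \l}$ is preserved under any composition of such shifts. Hence, for every $\al, \be \in \zd$,
\[
\Dl_2^\al \Dl_3^\be \Si_\Phi(j, k, \l) = (D_1^\al D_2^\be \Phi)(k, \l)\cdot \ind_{j = k + \l},
\]
where $D_1^\al, D_2^\be$ denote the analogous finite-difference operators acting on the first and second argument of $\Phi$, respectively.

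Next, I would use the fundamental theorem of calculus to write each one-dimensional difference as an integral of a first partial derivative, and iterate to represent $D_1^\al D_2^\be \Phi$ as a multiple integral of $\dd_x^\al \dd_y^\be \Phi$ over a box of side lengths $|\al|, |\be|$ near $(k, \l)$. Combined with hypothesis \eqref{E0} and Peetre's inequality $\jb{x + y}^s \les \jb{x}^s \jb{y}^{|s|}$ (used to absorb the shift of $O(|\al| + |\be|)$ incurred in the arguments), this yields the derivative-free bound
\[
|D_1^\al D_2^\be \Phi(k, \l)| \les_{\al, \be} \jb{|k| + |\l|}^{\om - |\al| - |\be|}.
\]

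Finally, I would invoke the support constraint $j = k + \l$, under which $j - k = \l$ and $j - \l = k$, yielding $|j - k| + |j - \l| = |k| + |\l|$ and $\jb{|j| + |k| + |\l|} \sim \jb{|k| + |\l|}$. Multiplying and dividing by $\jb{|k| + |\l|}^{2N}$ recasts the estimate in the form required by Definition~\ref{discrete-CM}:
\[
|\Dl_2^\al \Dl_3^\be \Si_\Phi(j, k, \l)| \les_{\al, \be, N} \jb{|j| + |k| + |\l|}^{(\om + 2N) - |\al| - |\be|}\jb{|j - k| + |j - \l|}^{-2N},
\]
so that $\Si_\Phi \in BT^{\om + 2N, N}(\Z^d)$; the shift of the order by $+2N$ is precisely the cost of reinserting the spurious decay factor on the right-hand side. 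The claim for $\Si_{a, b}$ then follows by specializing to the polynomial $\Phi(x, y) = (2\pi i x)^a (2\pi i y)^b$, which satisfies \eqref{E0} with $\om = |a| + |b|$. The only real subtlety is the use of Peetre's inequality when $\om - |\al| - |\be|$ is negative, where one cannot bound $\jb{|\tilde k| + |\tilde \l|}^{\om - |\al| - |\be|}$ by simply enlarging the argument; the two-sided form handles this without disturbing the exponent, at the cost of a constant depending on $\al, \be$.
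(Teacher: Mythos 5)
Your proposal is correct and follows essentially the same route as the paper's proof: both exploit that the shifts preserve the constraint $j=k+\l$ (so that $|j-k|+|j-\l|=|k|+|\l|$ and $\jb{|j|+|k|+|\l|}\sim\jb{|k|+|\l|}$), write $\Dl_2^\al\Dl_3^\be\Si_\Phi$ as an iterated difference of $\Phi$ evaluated at a point shifted by $O(|\al|+|\be|)$, and then apply \eqref{E0}. The only cosmetic difference is that you control the shift via the two-sided Peetre comparison, whereas the paper uses the mean value theorem together with the reduction \eqref{E3} to $|j|,|k|,|\l|\gg|\al|+|\be|$.
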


\begin{proof} 

Under $j = k+  \l$, we have
\begin{align*}
\ind_{j = k +  \l} \cdot \frac{\jb{|j|+ |k| + |\l|}^{\om + 2N - |\al| - |\be |}}{\jb{|j - k| + |j - \l|}^{2N}}
& \sim \ind_{j = k +  \l} \cdot \jb{|j| + |k| + |j-k|}^{\om - |\al| - |\be|} \\
& \sim \ind_{j = k +  \l} \cdot \jb{ |k| + |j-k|}^{\om - |\al| - |\be|}
\end{align*}

\noi
and thus it suffices to show that,
for any $\al, \be \in \Z^d$, there exists $C_{ \al. \be } > 0$ such that
\begin{align}
|\Delta_2^\alpha\Delta_3^\beta \Si_\Phi(j, k, \l)|
\le C_{ \al, \be} \cdot \ind_{j = k +  \l} \cdot \jb{|k| + |j-k|}^{\om - |\al| - |\be|}
\label{E2}
\end{align}

\noi
for any $j, k, \l \in \Z^d$.
For simplicity of notation, we drop $ \ind_{j = k +  \l}$
but it is understood that $j = k + \l$  in the following.
Moreover, since the constant $C_{N, \al, \be}$ in \eqref{dis-horm} can depend on $\al$ and $\be$,
we only need to prove the bound \eqref{dis-horm} for
\begin{align}
|j|, |k|, |\l| \gg |\al| + |\be|.
\label{E3}
\end{align}

When $\al = \be = 0$, we have
\[  |\Phi(k, \l)| \les \jb{|k| + |\l|}^{\om},  \]

\noi
yielding \eqref{E2}.
Let $m = 1, \dots, d$. By the mean value theorem, we have
\begin{align*}
\Dl_{3}^{m, +} \Si _\Phi(j, k, \l)= \Phi(j - \l, \l + e_m) - \Phi(j - \l, \l)
= \dd_{y_m} \Phi(j- \l,  \l+ \eps e_m)
\end{align*}

\noi
for some $\eps \in [0, 1]$.
By iteratively applying difference operators with the mean value theorem, we have
\begin{align*}
\Dl_{3}^{\be} \Si_\Phi (j, k, \l)
& = \dd^\be_y  \Phi(j - \l, \l+ \eps_\be)\\
& =  \dd^\be_y  \Phi(k, j- k + \eps_\be)
\end{align*}

\noi
for some $|\eps_\be| \le |\be|$.
Now, by applying $\Dl_2^\al = \Dl_{2, 1}^{\al_1} \cdots \Dl_{2. d}^{\al_d}$
in an iterative manner together with the mean value theorem, we have
\begin{align*}
\Dl_{2}^{\al}\Dl_3^\be \Si_\Phi (j, k, \l)
& = \dd^\al_x  \dd_y^\be \Phi(k+ \eps_\al, j - k + \eps_\be)
\end{align*}

\noi
for some $|\eps_\al| \le |\al|$.
In view of \eqref{E0} and \eqref{E3}, we then obtain
\begin{align*}
|\Dl_{2}^{\al}\Dl_3^\be \Si_\Phi (j, k, \l)|
& \les \jb{|k+ \eps_\al| + |j - k+ \eps_\be|}^{\om - |\al| - |\be|}\\
& \les \jb{|k|+|j - k|}^{\om- |\al| - |\be|},
\end{align*}

\noi
which yields \eqref{E2}.
\end{proof}


\begin{example}\rm
Next, let us consider the multiplication operator $M_V$ on the physical side given by
$$M_V(F, G)(x) = V(x) F(x) G(x).$$
By taking the Fourier transform, we have
\[ \F(M_V(F, G)) =  \sum_{k, \l \in \Z^d} \ft V(j - k - \l) f_k g_\l.\]

%

\noi
Given $V \in C^\infty(\T^d)$, consider the tensor
$$\Si_V (j, k, \l) := \ft V(j - k - \l).$$
Note that we have
$\Delta_2^\alpha\Delta_3^\beta\Si_V(j, k, \l) = 0$
unless $\al = \be = 0$.
By the smoothness of $V$,
we have
\begin{align*}
|\Si_V(j, k, \l)| = |\ft V(j - k - \l)| \les \jb{j - k - \l}^{-K}
\end{align*}

\noi
for any $K > 0$.
However, note that, for $j = 2k = 2\l$, we have
 $ |\ft V(j - k - \l) | \les 1$, while
\begin{align*}
\jb{|j - k| + |j-\l|}^{-2N} \sim \jb{j}^{-2N} \longrightarrow 0
\end{align*}

\noi
as $|j| \to \infty$.
This shows that we have $\Si_V \in BT^{\om, N}(\Z^d)$
only for $\om \geq 2N$. Compare this with the linear case
studied in Lemma 2.7 (ii) in \cite{fg}.
\end{example}

\begin{example}\rm
Lastly, consider the tensor
$$\Si_{V, \Phi}(j, k, \l) = \ft V(j - k - \l) \Phi(k, \l),$$
where $V \in C^\infty(\T^d)$ and $\Phi \in C^\infty(\R^d \times \R^d; \C)$
satisfies \eqref{E0}.
The tensor for the
bilinear partial differential operator with variable coefficients
in \eqref{D1}
is given by a linear combination of such tensors.
By arguing as in the proof of Lemma \ref{LEM:X}, we have
\begin{align*}
\Dl_{2}^{\al}\Dl_3^\be \Si_{V, \Phi} (j, k, \l)
& = \ft V(j - k - \l) \dd^\al_x  \dd_y^\be \Phi(k+ \eps_\al, j - k + \eps_\be)
\end{align*}

\noi
for some $|\eps_\al| \le |\al|$ and  $|\eps_\be| \le |\be|$.
Without loss of generality, assume \eqref{E3}.
Then, we have
\begin{align*}
|\Dl_{2}^{\al}\Dl_3^\be \Si_{V, \Phi} (j, k, \l)|
 \les \jb{j - k - \l}^{-K}  \jb{|k|+|\l|}^{\om- |\al| - |\be|}.
\end{align*}

\noi
for any $K>0$.
When $|j| \les |k| + |\l|$,  we  have
\begin{align*}
|\Dl_{2}^{\al}\Dl_3^\be \Si_{V, \Phi} (j, k, \l)|
& \les  \jb{|j| + |k|+|\l|}^{\om- |\al| - |\be|}.
\end{align*}

\noi
On the other hand,
when $|j| \gg  |k| + |\l|$,  we  have
\begin{align*}
|\Dl_{2}^{\al}\Dl_3^\be \Si_{V, \Phi} (j, k, \l)|
& \les  \jb{j}^{-K+\om} \sim \jb{|j| + |k|+|\l|}^{-K+\om}
\end{align*}

\noi
for any $K > 0$.
Hence, we conclude that $\Si_{V, \Phi} \in BT^{\om + 2N, N} (\Z^d)$
for any $N \in \N$.
\end{example}

\begin{ackno}\rm
T.O.~was supported by the
 European Research Council (grant no.~864138 ``SingStochDispDyn").
T.O.~would like to thank 
the  Centre de recherches math\'ematiques,  Canada, 
for its hospitality, 
where this manuscript was prepared.

\end{ackno}

\noi
{\bf Conflict of interest:}
The authors have  no relevant financial or non-financial interests to disclose.
%
%


\begin{thebibliography}{20}


\bibitem{bbmnt} \'A. B\'enyi, F. Bernicot, D. Maldonado, V. Naibo,  and R.H. Torres, \emph{On the H\"ormander classes of bilinear pseudodifferential operators II}, Indiana Univ. Math. J. {\bf 62} (2013), no. 6, 1733--1764.


\bibitem{bmnt} \'A. B\'enyi, D. Maldonado, V. Naibo,  and R.H. Torres, \emph{On the H\"ormander classes of bilinear pseudodifferential operators}, Integral Equations Operator Theory {\bf 67} (2010), no. 3, 341--364.

\bibitem{BO2} \'A. B\'enyi and T. Oh,
\emph{Smoothing of commutators for a H\"ormander class of bilinear pseudodifferential operators}, J. Fourier Anal. Appl. {\bf 20} (2014), no. 2, 282--300.

\bibitem{bo} \'A. B\'enyi and T. Oh, \emph{On a class of bilinear pseudodifferential operators}, J. Funct. Spaces Appl. 2013, Art. ID 560976, 5 pp.

\bibitem{BO5} \'A. B\'enyi and T. Oh, \emph{The Sobolev inequality on the torus revisited}, Publ. Math. Debrecen {\bf 83} (2013), no. 3, 359--374.


\bibitem{BT13} \'A. B\'enyi and R.H. Torres, \emph{Compact bilinear operators and commutators}, Proc. Amer. Math. Soc. {\bf 141} (2013), no. 10, 3609--3621.

\bibitem{BT}
\'A. B\'enyi and R.H.Torres
\emph{Symbolic calculus and the transposes of bilinear pseudodifferential operators,} Comm. Partial Differential Equations 28 (2003), no. 5-6, 1161--1181.

\bibitem{bt} \'A. B\'enyi and N. Tzirakis, \emph{Multilinear almost diagonal estimates and applications}, Studia Math. {\bf 164} (2004), no. 1, 75--89.

\bibitem{C} O.~Chodosh, \emph{Infinite matrix representations of isotropic pseudodifferential operators}, Methods Appl. Anal.
{\bf 18} (2011), no. 4, 351--372.

\bibitem{cm0} R.R. Coifman and Y. Meyer, \emph{On commutators of singular integrals and bilinear singular integrals},
Trans.\ Amer.\ Math.\ Soc. {\bf 212} (1975), 315--331.

\bibitem{cm1} R.R. Coifman and Y. Meyer,
\emph{Au del\`a des op\'erateurs pseudo-diff\'erentiels,}
 Ast\'erisque, 57. Soci\'et\'e Math\'ematique de France, Paris, 1978. i+185 pp.


\bibitem{cm2} R.R. Coifman and Y. Meyer, \emph{Commutateurs d'int\'egrales singuli\`ers et op\'erateurs multilin\'eaires},
Ann. Inst. Fourier Grenoble \textbf{28} (1978), 177--202.

\bibitem{fg} E. Faou and B. Gr\'ebert, \emph{Discrete pseudo-differential operators and applications to numerical schemes},
arXiv:2109.15186 [math.AP].


\bibitem{gt} L. Grafakos and R.H. Torres, \emph{Discrete decompositions for bilinear operators and almost diagonal conditions}, Trans. Amer. Math. Soc. {\bf 354} (2002), no. 3, 1153--1176.


\bibitem{GT2}
L. Grafakos and R.H. Torres,
\emph{Multilinear Calder\'on--Zygmund theory,} Adv. Math. 165 (2002), no. 1, 124--164.

\bibitem{HOH} H. Hanche-Olsen and H. Holden, \emph{The Kolmogorov-Riesz compactness theorem,} Expo. Math. {\bf 28} (2010), no. 4, 385--394.

\bibitem{H} L. H\"ormander, \emph{Pseudo-differential operators and hypoelliptic equations},
 Singular integrals (Proc. Sympos. Pure Math., Vol. X, Chicago, III., 1966), pp. 138--183. Amer. Math. Soc., Providence, R.I., 1967.


\bibitem{N} V. Naibo, \emph{Bilinear pseudodifferential operators and the H\"ormander classes}, Notices Amer. Math. Soc. {\bf 68} (2021), no. 7, 1119--1130.


\bibitem{OW1}
T.~Oh and Y.~Wang,
\emph{Global well-posedness of the one-dimensional cubic nonlinear Schr\"odinger equation in almost critical spaces}, J. Differential Equations {\bf 269} (2020), no. 1, 612--640.

\bibitem{OW2}
T.~Oh and Y.~Wang,
\emph{Normal form approach to the one-dimensional periodic cubic nonlinear Schr\"odinger equation in almost critical Fourier-Lebesgue spaces},  J. Anal. Math. {\bf 143} (2021), no. 2, 723--762.


\bibitem{RS1}
 M.~Reed and B.~Simon,
\emph{Methods of modern mathematical physics. I. Functional analysis,} Second edition. Academic Press, Inc. [Harcourt Brace Jovanovich, Publishers], New York, 1980. xv+400 pp.


\bibitem{T} R.H. Torres, \emph{Multilinear singular integral operators with variable coefficients}, Rev. Un. Mat. Argentina {\bf 50} (2009), no. 2, 157--174.

\bibitem{Tr}
F.~Tr\`eves,
\emph{Introduction to pseudodifferential and Fourier integral operators,} Vol. 1. Pseudodifferential operators. University Series in Mathematics. Plenum Press, New York-London, 1980. xxvii+299+xi pp.

\bibitem{Yo} K. Yosida, \emph{Functional analysis}, Springer-Verlag, Berlin (1995).


\end{thebibliography}
\end{document}